\renewcommand*{\eqref}[1]{%
	\hyperref[{#1}]{\textup{\tagform@{\ref*{#1}}}}%
}
\setlist[enumerate,1]{label={\textup{(\roman*)}}}
\theoremstyle{plain}
\newtheorem{theorem}{Theorem}[section]
\newtheorem{lemma}[theorem]{Lemma}
\theoremstyle{definition}
\newtheorem{remark}[theorem]{Remark}
\newtheorem{definition}[theorem]{Definition}
\numberwithin{equation}{section}
\def\esssup{\operatornamewithlimits{ess\,\sup}}
\DeclareMathOperator\supp{supp}
\def\L1loc{L^1_{\text{loc}}}
\begin{document}

\title{On the smoothness of slowly varying functions}
\author{Dalimil Pe{\v s}a}

\address{Dalimil Pe{\v s}a, Department of Mathematical Analysis, Faculty of Mathematics and
	Physics, Charles University, Sokolovsk\'a~83,
	186~75 Praha~8, Czech Republic}
\email{pesa@karlin.mff.cuni.cz}
\urladdr{0000-0001-6638-0913}

\subjclass[2010]{26A12}
\keywords{slowly varying functions, approximation, smoothness}

\thanks{This research was supported by the Grant schemes at Charles University, reg.~No.~CZ.02.2.69/0.0/0.0/19\_073/0016935, the grants no.~P201/21-01976S and P202/23-04720S of the Czech Science Foundation, and Charles University Research program No. UNCE/SCI/023.}

\begin{abstract}
In this paper we consider the question of smoothness of slowly varying functions satisfying the modern definition that, in the last two decades, gained prevalence in the applications concerning function spaces and interpolation. We show, that every slowly varying function of this type is equivalent to a slowly varying function that has continuous classical derivatives of all orders.
\end{abstract}

\date{\today}

\maketitle

\makeatletter
   \providecommand\@dotsep{2}
\makeatother

\section{Introduction and statement of the main result}
In analysis, there are types of problems that, in their critical or limiting form, often lead to structures defined using various concrete examples of slowly varying functions. Those include, for example, real interpolation (see e.g.~\cite{BennettRudnick80}), various forms of Sobolev embeddings (see e.g.~\cite{BrezisWainger80}, \cite{CianchiPick09}, and \cite{GurkaOpic05}), and regularity of degenerate elliptic equations (see e.g.~\cite{diBlasioFeo07}). To study those problems in more generality, new structures have been introduced in recent years that are defined using slowly varying functions in abstract, with the recent papers mostly using a new definition of slowly varying functions which first appeared in \cite{GogatishviliOpic04} or \cite{GogatishviliOpic05} and which we present below as Definition~\ref{DSV}. Examples of such structures include the Lorentz--Karamata spaces (introduced in \cite{EdmundsKerman00} using an older definition of slowly varying functions and later in \cite{GogatishviliOpic04} using the modern one), various generalisations of the real interpolation spaces (as for example in \cite{GogatishviliOpic05}), and spaces of generalised smoothness (as for example in \cite{MouraNeves09}).

During the study of those new structures the question of smoothness of slowly varying function naturally appeared. While it is rather clear that the modern definition of slowly varying functions does not even guarantee continuity, it remained open whether we could approximate a given slowly varying function by a smooth function in a way that would yield equivalent structures in the respective applications. We provide a positive answer to this question in Theorem~\ref{TSmSV}, where we obtain such an approximation by a function that has continuous classical derivatives of all orders. To prove this result, we have developed a new method based on a careful analysis of the properties of slowly varying functions. In a series of lemmata, we develop tools that allow for a precise decomposition of a given slowly varying function into two parts, approximating each of those separately (using a mollification argument, where a precise treatment is needed to show that the mollified function is indeed a valid approximation), and finally recomposing the obtained functions into the desired approximation of the original function.

Let us now be more specific. In this paper, we will work with the following modern definition of slowly varying functions that have appeared in literature in recent years:

\begin{definition} \label{DSV}
	Let $b:(0,\infty) \rightarrow (0, \infty)$ be a measurable function. Then $b$ is said to be slowly varying, abbreviated s.v., if for every $\varepsilon > 0$ there exists a~non-decreasing function $b_{\varepsilon}$ and a~non-increasing function $b_{-\varepsilon}$ such that $t^{\varepsilon}b(t) \approx b_{\varepsilon}(t)$ on $(0, \infty)$ and $t^{-\varepsilon}b(t) \approx b_{-\varepsilon}(t)$ on $(0, \infty)$.
\end{definition}

Here, as well as at every other occurrence in this paper, the symbol ``$\approx$'' means that the ratio of the left- and right-hand sides is sandwiched between two finite positive constants that do not depend on the arguments of the functions in question (though they usually do depend on the functions themselves), where the equivalence is assumed to hold on the entire domains of the functions, unless specified otherwise. Furthermore, measurability is in all cases to be understood as respective to the classical Lebesgue measure.

This new definition appeared first in either \cite{GogatishviliOpic04} or \cite{GogatishviliOpic05} (it is unclear which of the papers is in fact older) and has since become standard in both the theory and applications of Lorentz--Karamata spaces (see e.g.~\cite{Bathory18}, \cite{CaetanoGogatishvili11}, \cite{EdmundsOpic08}, \cite{GogatishviliNeves10}, \cite{GogatishviliOpic04}, \cite{GurkaOpic07}, \cite{NevesOpic20}, and \cite{PesaLK}) as well as the related interpolation theory (see e.g.~\cite{Baena-MiretGogatishvili22}, \cite{GogatishviliOpic05}, and \cite{OpicGroverTBD}). It is weaker than the other two common definitions as used in, for example, \cite{BinghamGoldie87} and \cite{Zygmund35_3} (we discuss the relationships and differences in detail in Remark~\ref{RDSV}), and is therefore more suited for the applications in function spaces and interpolation, as the objects that are defined using this definition are more general than would be the case if one of the older definitions were used.

The class of functions satisfying this definition is rather rich. It includes, for example, constant positive functions and, to provide something at least slightly less trivial, the functions $t \mapsto 1+\lvert \log(t) \rvert$ and $t \mapsto 1+\log(1+\lvert \log(t) \rvert)$ occurring in the definition of Lorentz--Zygmund spaces and generalized Lorentz--Zygmund spaces, the former being introduced by Bennett and Rudnick in \cite{BennettRudnick80} while the latter were introduced by Edmunds, Gurka, and Opic in \cite{EdmundsGurka95} and later treated in great detail by Opic and Pick in \cite{OpicPick99}. As a further example of an s.v.~function, this time non-logarithmic, we present a function $b$ defined on $(0, \infty)$ by

\begin{equation*}
	b(t) = \begin{cases}
		e^{\sqrt{\log t}} & \text{for } t \in [1, \infty), \\
		e^{\sqrt{\log t^{-1}}} & \text{for } t \in (0, 1).
	\end{cases}
\end{equation*} 
Restriction of this function to the interval $(0, 1)$ was used in \cite{CianchiPick09} to characterise self-optimal spaces for Gauss-Sobolev embeddings.

No comprehensive theory of the s.v.~functions of this type has been developed, mainly because they have so far been treated only as a tool in the theories of function spaces and interpolation, so the properties developed in the various papers that use these functions do not go very deep. We provide a summary of those known results in Lemma~\ref{LSV}. In this paper we aim to take a step towards this theory by proving the following result:

\begin{theorem} \label{TSmSV}
	Let $b$ be an s.v.~function. Then there is a function $c : (0, \infty) \to (0, \infty)$ which satisfies $c \approx b$, is also s.v., and has continuous classical derivatives of all orders (i.e.~$c \in \mathcal{C}^{\infty}$).
\end{theorem}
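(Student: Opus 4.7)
My approach is to mollify $b$ in logarithmic coordinates. Set $B(s) := \log b(e^s)$ for $s \in \mathbb{R}$; since $b$ is positive and measurable, so is $B$. The plan is to construct a smooth $\tilde B$ with $\sup_s |\tilde B(s) - B(s)| < \infty$ such that, for every $\varepsilon>0$, the function $s \mapsto \varepsilon s + \tilde B(s)$ is within bounded additive distance of a non-decreasing function (and analogously, $s \mapsto -\varepsilon s + \tilde B(s)$ of a non-increasing one). Defining $c(t) := \exp(\tilde B(\log t))$ then produces a function in $C^\infty((0,\infty))$ with $c \approx b$ that is itself slowly varying.

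The key input is a quantitative local estimate extracted from Definition~\ref{DSV}. If $b_\varepsilon(t) \approx t^\varepsilon b(t)$ is non-decreasing and $b_{-\varepsilon}(t) \approx t^{-\varepsilon} b(t)$ is non-increasing, then for $s \le t$ routine manipulations (cf.~Lemma~\ref{LSV}) yield
\[
C_\varepsilon^{-1}\left(\frac{s}{t}\right)^\varepsilon \le \frac{b(t)}{b(s)} \le C_\varepsilon \left(\frac{t}{s}\right)^\varepsilon
\]
for every $\varepsilon > 0$, equivalently $|B(s) - B(t)| \le \log C_\varepsilon + \varepsilon |s-t|$. In particular $B$ is locally bounded. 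I would then pick a non-negative $\psi \in C_c^\infty(\mathbb{R})$ with $\int \psi = 1$ and $\supp \psi \subset [-1,1]$, set $\tilde B := \psi \ast B$, and obtain from the estimate applied with $\varepsilon = 1$
\[
|\tilde B(s) - B(s)| \le \int_{\mathbb R} \psi(u)\,|B(s-u) - B(s)|\,du \le \log C_1 + 1.
\]
Hence $c(t) := \exp(\tilde B(\log t))$ is smooth, positive, and satisfies $c \approx b$ on $(0,\infty)$.

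For the slowly varying property of $c$, fix $\varepsilon > 0$ and set $\beta_\varepsilon(s) := \log b_\varepsilon(e^s)$, which is non-decreasing and satisfies $|\beta_\varepsilon(s) - (\varepsilon s + B(s))| \le M_\varepsilon$ for some $M_\varepsilon$. A direct computation gives
\[
\tilde B(s) + \varepsilon s = \int_{\mathbb R} \psi(u)\bigl[B(s-u) + \varepsilon(s-u)\bigr]\,du + \varepsilon \int_{\mathbb R} u\,\psi(u)\,du,
\]
and the first integral on the right differs from $(\psi \ast \beta_\varepsilon)(s)$ by at most $M_\varepsilon$. Since $\psi \ge 0$ and $\beta_\varepsilon$ is non-decreasing, $\psi \ast \beta_\varepsilon$ is non-decreasing and smooth; exponentiating and composing with $\log$ then produces a non-decreasing smooth function equivalent to $t^\varepsilon c(t)$. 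The symmetric argument using $b_{-\varepsilon}$ provides the non-increasing companion for $t^{-\varepsilon} c(t)$, so $c$ satisfies Definition~\ref{DSV}.

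The main obstacle I anticipate is establishing the uniform Lipschitz-type estimate $|B(s) - B(t)| \le \log C_\varepsilon + \varepsilon |s-t|$ cleanly from the definition; once in place, the remainder is a standard mollification together with the observation that convolution with a non-negative mollifier preserves monotonicity. This plan differs from the decomposition strategy hinted at in the introduction, which appears to split off ``increasing'' and ``decreasing'' parts of $b$ and approximate them separately before recombining, but the direct log-coordinate mollification described above should suffice.
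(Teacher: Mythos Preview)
Your argument is correct and considerably more direct than the paper's. The key estimate $|B(s)-B(t)|\le \log C_\varepsilon + \varepsilon|s-t|$ that you flag as the main obstacle is in fact exactly property~\ref{CSV} of Lemma~\ref{LSV} (equation~\eqref{CSV1}) rewritten in logarithmic coordinates, so no new work is needed there. From that point everything goes through: $B$ is locally bounded hence locally integrable, $\tilde B=\psi\ast B$ is $\mathcal C^\infty$, the bound $|\tilde B - B|\le \log C_1 + 1$ gives $c\approx b$, and then $c$ is automatically s.v.\ by the observation (Lemma~\ref{LCCC}) that the class of s.v.\ functions is closed under~$\approx$. Your additional argument mollifying $\beta_\varepsilon$ is a nice touch---it exhibits smooth monotone witnesses---but is not strictly required.

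The paper takes a different route: it mollifies in the original variable $t$ rather than in $\log t$. Because an additive shift in $t$ does not interact well with the behaviour of $b$ near $0$, the paper first splits $b$ into two pieces capturing the behaviour on $(0,1)$ and $(1,\infty)$ separately (Lemma~\ref{LSSV}), extends each piece by a constant, mollifies each piece (Lemma~\ref{LSmSV}, which needs Lemma~\ref{LShSV} to control additive shifts uniformly), and then glues the two smooth pieces back together (Lemma~\ref{LCoSV}), taking care that all derivatives match at the joint. Your change of variables $t\mapsto \log t$ converts the multiplicative structure that s.v.\ functions respect into an additive one, so a single convolution handles both ends simultaneously and the whole decomposition--recomposition machinery is bypassed. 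What the paper's approach buys is a collection of structural lemmata about cutting, shifting, and recombining s.v.\ functions that it advertises as being of independent interest; what your approach buys is a one-page proof.
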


The question of smoothness of s.v.~function arose naturally in the applications, as it is in some cases necessary (or at least convenient) to assume that the s.v.~function in question is differentiable, see e.g.~\cite{OpicGroverTBD} or \cite{PesaLK}. 

For example, as shown in \cite[Theorem~3.14]{PesaLK}, the fundamental function of the Lorentz--Karamata space $L^{\infty, \infty, b}$ is equivalent to the function $b$ itself (provided that $b$ is non-decreasing, which can be in this context assumed without loss of generality). Assuming that $b$ is smooth then allows for an explicit description of the corresponding Lorentz endpoint space, where one would otherwise only have the significantly less transparent formula based on Lebesgue--Stieltjes integration.

It is worth pointing out that a weaker version of our result has been already presented in \cite[Remark~9.2]{OpicGroverTBD}. Using the property listed below in Lemma~\ref{LSV} as \ref{LEFF}, they observed that any s.v.~function is equivalent to an absolutely continuous function. Iterating their argument, we could easily show that it is in fact equivalent to a function $c \in \mathcal{C}^k$ for any finite $k$. However, this method fails to provide an equivalent function $c \in \mathcal{C}^{\infty}$.

The paper is structured in the following way: In Section~\ref{SecPrel} we gather the already known results and put Definition~\ref{DSV} into context. Then, in Section~\ref{SecProof} we prove the main result as well as several auxiliary results, many of which are of independent interest.

\section{Slowly varying functions---a survey} \label{SecPrel}
Let us first recall some important properties of s.v.~functions that are either known or that follow from known results by straightforward generalisations.

\begin{lemma} \label{LSV}
	Let $b, b_2$ be s.v.\ functions. Then the following statements hold:
	\begin{enumerate}[label=\textup{(SV\arabic*)}]
		\item \label{SV1} The function $b^r$ is slowly varying for every $r \in \mathbb{R}$. 
		\item \label{SV2} The functions $b+b_2$, $bb_2$, $\frac{b}{b_2}$ and $t \mapsto b(\frac{1}{t})$ are	are s.v.
		\item \label{SVC} The function $b$ is bounded by positive constants on compact subsets of $(0, \infty)$.
		\item \label{CSV} Let $c > 0$, then $b(ct) \approx b(t)$ on $(0, \infty)$. More specifically, for every $\varepsilon >0$ there is a constant $C_{\varepsilon} \geq 1$ such that it holds for every $c > 0$ and every $t \in (0, \infty)$ that
		\begin{equation} \label{CSV1}
			C_{\varepsilon}^{-1} \min\{c^{-\varepsilon}, \, c^{\varepsilon} \} b(t) \leq b(ct) \leq C_{\varepsilon} \max\{c^{-\varepsilon}, \, c^{\varepsilon} \} b(t).
		\end{equation}
		\item \label{SV3} It holds for every $ \alpha \neq 0$ that
		\begin{align*}
			\lim_{t \to 0^+} t^{\alpha}b(t) = \lim_{t \to 0^+}t^{\alpha}, \\
			\lim_{t \to \infty} t^{\alpha}b(t) = \lim_{t \to \infty}t^{\alpha}.
		\end{align*}
		\item \label{SV4} Let $\alpha \neq -1$. Then
		\begin{equation*}
			\int_0^1 t^{\alpha}b(t) \: dt < \infty \iff \int_0^1 t^{\alpha} \: dt < \infty
		\end{equation*}
		and
		\begin{equation*}
			\int_1^{\infty} t^{\alpha}b(t) \: dt < \infty \iff \int_1^{\infty} t^{\alpha} \: dt < \infty.
		\end{equation*}
		Consequently,
		\begin{equation*}
			\int_0^{\infty} t^{\alpha} b(t) \: dt = \infty.
		\end{equation*}
		\item \label{LEFF} \label{LEFFH} Let $\alpha \in (0, \infty)$. Then it holds for every $t \in (0, \infty)$ that
		\begin{align}
			\int_0^{t} s^{\alpha-1} b(s) \: ds &\approx t^{\alpha} b(t), & \esssup_{s \in (0,t)} s^{\alpha} b(s) &\approx t^{\alpha} b(t), \label{LEFF1}\\
			\int_t^{\infty} s^{-\alpha-1} b(s) \: ds &\approx t^{-\alpha} b(t), & \esssup_{s \in (t,\infty)} s^{-\alpha} b(s) &\approx t^{-\alpha} b(t).  \label{LEFFH1}
		\end{align}
		\item \label{LTb} \label{LHb} Let us define, for $t \in (0, \infty)$,
		\begin{align*}
			\tilde{b}(t) &= \int_0^{t} s^{-1} b(s) \: ds, & \tilde{b}_{\infty}(t) &= \esssup_{s \in (0, t)} b(s), \\
			\hat{b}(t) &= \int_t^{\infty} s^{-1} b(s) \: ds, & \hat{b}_{\infty}(t) &= \esssup_{s \in (t, \infty)} b(s).
		\end{align*}
		Then $b \lesssim \tilde{b}$, $b \lesssim \tilde{b}_{\infty}$, $b \lesssim \hat{b}$, and $b \lesssim \hat{b}_{\infty}$ on $(0, \infty)$ and the functions $\tilde{b}$, $\tilde{b}_{\infty}$, $\hat{b}$, and  $\hat{b}_{\infty}$ are s.v.~if and only if they are finite $\lambda$-a.e.~on $(0, \infty)$. Furthermore, it holds that
		\begin{align} \label{LTb1}
			\limsup_{t \to 0^+} \frac{\tilde{b}(t)}{b(t)} = \limsup_{t \to \infty} \frac{\tilde{b}(t)}{b(t)} &= \infty, & 
			\limsup_{t \to 0^+} \frac{\hat{b}(t)}{b(t)} = \limsup_{t \to \infty} \frac{\hat{b}(t)}{b(t)} &= \infty.
		\end{align}
	\end{enumerate}
\end{lemma}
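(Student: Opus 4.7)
The plan is to verify each item by systematically exploiting the monotone equivalents $b_\varepsilon$, $b_{-\varepsilon}$ supplied by Definition~\ref{DSV}. Items \ref{SV1} and \ref{SV2} are purely formal: for \ref{SV1} with $r > 0$, observe that $t^{r\varepsilon} b(t)^r \approx b_\varepsilon(t)^r$ is again non-decreasing, and the case $r < 0$ is symmetric via $b_{-\varepsilon}$; for \ref{SV2}, sums and products preserve the monotonicity class, the quotient reduces to \ref{SV1} combined with the product, and $t \mapsto b(1/t)$ is handled by the substitution $s = 1/t$, which interchanges the roles of $b_\varepsilon$ and $b_{-\varepsilon}$. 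Item \ref{SVC} is then immediate, since $b_\varepsilon$ and $b_{-\varepsilon}$, being monotone and positive, are bounded away from $0$ and $\infty$ on any compact $[\delta, T] \subset (0, \infty)$. For \ref{CSV}, comparing $b(ct)$ and $b(t)$ through $b_\varepsilon$ gives, when $c \ge 1$, the bound $b(ct) \ge C_\varepsilon^{-1} c^{-\varepsilon} b(t)$ via $b_\varepsilon(ct) \ge b_\varepsilon(t)$, while a symmetric application of $b_{-\varepsilon}$ yields the upper bound; the case $c < 1$ follows by exchanging $c$ with $c^{-1}$.

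For \ref{SV3}, pick $\varepsilon \in (0, |\alpha|)$ and use the factorisation $t^\alpha b(t) \approx t^{\alpha \mp \varepsilon} b_{\pm \varepsilon}(t)$, choosing the sign so that the power dominates and $b_{\pm\varepsilon}$ is bounded on the relevant end of $(0, \infty)$ by monotonicity; the limit is then governed by the power. Item \ref{SV4} follows from \ref{SV3}: in the convergent case, sandwich $b$ between $t^{\pm\varepsilon}$ for $\varepsilon$ sufficiently small; divergence in the complementary direction is inherited from $\int t^\alpha\,dt$.

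The more substantial work is \ref{LEFF}. For $\alpha > 0$, the upper bound on $\int_0^t s^{\alpha-1} b(s)\,ds$ follows from writing $b(s) \approx s^{-\varepsilon} b_\varepsilon(s)$ with $\varepsilon < \alpha$, pulling $b_\varepsilon(s) \le b_\varepsilon(t)$ outside the integral by monotonicity, and evaluating the resulting power integral to a constant multiple of $t^{\alpha-\varepsilon}$; combined with $b_\varepsilon(t) \approx t^\varepsilon b(t)$ this yields $\approx t^\alpha b(t)$. The matching lower bound comes from restricting the integral to $(t/2, t)$ and invoking \ref{CSV} to replace $b(s)$ by $b(t)$ up to a uniform factor. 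The supremum versions are handled by the same monotone-representative argument, directly yielding $\esssup_{s \in (0,t)} s^\alpha b(s) \approx t^{\alpha-\varepsilon} b_\varepsilon(t) \approx t^\alpha b(t)$. The $(t, \infty)$ identities are then dual to the $(0, t)$ ones under $s \mapsto 1/s$ via \ref{SV2}.

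The main obstacle is \ref{LTb}, which corresponds to the degenerate endpoint $\alpha = 0$ of \ref{LEFF} where the integrals can diverge. The inequality $b \lesssim \tilde b$ still survives from $\tilde b(t) \ge \int_{t/2}^t s^{-1} b(s)\,ds \gtrsim b(t)$ via \ref{CSV}. Assuming $\tilde b$ is finite almost everywhere, slow variation requires constructing both monotone equivalents: the non-decreasing equivalent of $t^\varepsilon \tilde b(t)$ is trivial since $\tilde b$ itself is non-decreasing. For the non-increasing equivalent of $t^{-\varepsilon} \tilde b(t)$, I would, for $s \ge t$, split $\tilde b(s) = \tilde b(t) + \int_t^s r^{-1} b(r)\,dr$ and estimate the tail by \ref{CSV} with exponent $\varepsilon/2$, obtaining $s^{-\varepsilon} \tilde b(s) \lesssim t^{-\varepsilon} \tilde b(t)$, so that the tail essential supremum $r \mapsto \esssup_{s \ge r} s^{-\varepsilon} \tilde b(s)$ supplies the required non-increasing equivalent. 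For \eqref{LTb1}, argue by contradiction: if $\tilde b(t) \le M b(t)$ on an interval, the differential inequality $\tilde b'(t) = b(t)/t \ge \tilde b(t)/(Mt)$ integrates to a power bound on $\tilde b$, hence on $b$, which combined with \ref{SV3} applied at a suitable exponent (namely $\pm 1/(2M)$) produces a contradiction. The corresponding statements for $\tilde b_\infty$, $\hat b$, $\hat b_\infty$ follow by the same pattern, with the $\esssup$ versions conceptually simpler and the hatted versions dual under $s \mapsto 1/s$.
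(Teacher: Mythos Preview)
Your proposal is correct. The paper, however, does not actually prove this lemma: it treats it as a survey result, giving citations to \cite{GogatishviliOpic05}, \cite{Bathory18}, \cite{GurkaOpic07}, \cite{CaetanoGogatishvili11}, and \cite{NevesOpic20} for items \ref{SV1}, \ref{SV2}, \ref{SVC}, \ref{CSV}, \ref{LEFF}, and \ref{LTb}, and simply stating that \ref{SV3} and \ref{SV4} ``follow directly from the definition'' with details left to the reader. Your write-up therefore goes well beyond what the paper does, supplying a self-contained argument where the paper supplies only pointers.

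A few minor comments on places where your sketch is a bit compressed. In the \eqref{LTb1} argument, the phrase ``integrates to a power bound on $\tilde b$, hence on $b$'' hides an asymmetry worth making explicit: at $t\to\infty$ the Gr\"onwall-type inequality gives the \emph{lower} bound $\tilde b(t)\gtrsim t^{1/M}$, which you transfer to $b$ via the assumed $b\ge \tilde b/M$; at $t\to 0^+$ it gives the \emph{upper} bound $\tilde b(s)\lesssim s^{1/M}$, which you transfer via the already-established $b\lesssim \tilde b$. In each case the contradiction with \ref{SV3} uses the exponent $-1/(2M)$ and the appropriate endpoint. Also, when you invoke finiteness of $\tilde b$ ``almost everywhere'', note that by \ref{SVC} finiteness at a single point already forces finiteness everywhere, since $\int_{t_0}^t s^{-1}b(s)\,ds$ is finite on compacts; this is harmless but worth saying. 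Finally, your duality reduction for $\hat b$ is clean: with $B(u)=b(1/u)$ one has $\hat b(t)=\int_0^{1/t}r^{-1}B(r)\,dr$, so all four limsup claims in \eqref{LTb1} reduce to the two for $\tilde b$.
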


\begin{proof}
	For \ref{SV1}, \ref{SV2},  \ref{CSV}, and \ref{LEFF} see \cite[Proposition~2.2]{GogatishviliOpic05}. We note that it is not quite clear from the original formulation of \ref{CSV} in \cite[Proposition~2.2, (iii)]{GogatishviliOpic05} that the constant $C_{\varepsilon}$ depends only on $\varepsilon$, but if follows from the presented proof that it is indeed so. For \ref{SVC} see \cite[Proposition~2.1]{Bathory18} and for the first part of \ref{LTb} see \cite[Lemma~2.2]{Bathory18} and \cite[Lemma~2.1]{GurkaOpic07}. The remaining part of \ref{LTb}, i.e.~\eqref{LTb1}, has been partially proved in \cite[Lemma~2.2]{CaetanoGogatishvili11} and \cite[(2.1) and (2.2)]{NevesOpic20} and the method used in the former paper can be easily adapted to cover all the four cases mentioned in \eqref{LTb1}.
	
	The properties \ref{SV3}, and \ref{SV4} follow directly from the definition. We leave the details to the reader.
\end{proof}

\begin{remark} \label{RLC}
	The restrictions $ \alpha \neq 0$ and $ \alpha \neq -1$ in \ref{SV3} and \ref{SV4}, respectively, are necessary, as the property that $b$ is s.v.~is simply too weak to determine the behaviour of  $t^{\alpha}b(t)$ in these limiting cases. The counterexamples are easy to construct.
\end{remark}

\begin{remark}
	The property \ref{LEFF} is very important. Not only it provides an essential tool for computations, but it also gives us a finer grasp on the definition of s.v. functions. To be more specific: while the definition requires only that for every $\varepsilon > 0$ there is some pair of monotone functions that satisfies the appropriate requirements, \ref{LEFF} provides us with a specific pair, given by explicit formulas, and asserts that there is a pair of monotone functions satisfying the above mentioned requirements if and only if this specific pair satisfies them.
\end{remark}

\begin{remark} \label{RDSV}
	There exist three different definitions of slowly varying functions that can be found in the literature. The three defining conditions (for a positive $b \in M((0,\infty), \lambda)$) are:
	\begin{enumerate}
		\item \label{DSVi}
		For every $\varepsilon > 0$ the function $t^{\varepsilon}b(t)$ is non-decreasing on some neighbourhoods of zero and infinity while the function $t^{-\varepsilon}b(t)$ is non-increasing on some neighbourhoods of zero and infinity.
		\item \label{DSVii}
		For every $\varepsilon > 0$ there exists a non-decreasing function $b_{\varepsilon}$ and non-increasing function $b_{-\varepsilon}$ such that
		\begin{equation*}
			\lim_{t \to 0} \frac{t^{\varepsilon}b(t)}{b_{\varepsilon}(t)} = \lim_{t \to \infty} \frac{t^{\varepsilon}b(t)}{b_{\varepsilon}(t)} = \lim_{t \to 0} \frac{t^{-\varepsilon}b(t)}{b_{-\varepsilon}(t)} =\lim_{t \to \infty} \frac{t^{-\varepsilon}b(t)}{b_{-\varepsilon}(t)} = 1.
		\end{equation*}
		\item \label{DSViii}
		$b$ satisfies Definition~\ref{DSV}.
	\end{enumerate}
	The functions satisfying the condition \ref{DSVi} are said to belong to the Zygmund class $\mathcal{Z}$ (see \cite{Zygmund35_3} for further details) and this condition was also used in the original definition of Lorentz--Karamata spaces in \cite{EdmundsKerman00}. The condition \ref{DSVii} is the closest to the original definition of slowly varying functions as given by Karamata in \cite{Karamata30} and \cite{Karamata33}---while it is not the original definition, it is equivalent to it. Functions satisfying this condition are treated thoroughly in \cite{BinghamGoldie87}. The condition \ref{DSViii} is the one currently used in most papers concerning Lorentz--Karamata spaces and it first appeared in either \cite{GogatishviliOpic04} or \cite{GogatishviliOpic05} (the idea to use only the equivalence with monotone functions appeared originally in the paper \cite{Neves02}, but in this case the function was also required to behave the same way near zero as it does near infinity, so the definition was significantly less general).
	
	Provided that the function $b$ is assumed to be bounded on compact sets (which is very reasonable in the applications concerning functions spaces and interpolation), then clearly the validity of \ref{DSVi} implies that of \ref{DSVii} and the validity of \ref{DSVii} implies that of \ref{DSViii}. On the other hand, for any function $b$ satisfying \ref{DSVii} there is a function $b_0$ satisfying \ref{DSVi} such that
	\begin{equation*}
		\lim_{t \to 0} \frac{b(t)}{b_{0}(t)} = \lim_{t \to \infty} \frac{b(t)}{b_{0}(t)} = 1.
	\end{equation*}
	
	Some of the properties shown in Lemma~\ref{LSV} can be strengthened if we assume that $b$ satisfies \ref{DSVii}. Specifically, in \ref{CSV} and \ref{LEFF} we can replace the relation ``$\approx$" in all its occurrences with ``the ratio of the left-hand side and the right-hand side converges to $1$ at both $0$ and $\infty$" while	in \eqref{LTb1} we get that
	\begin{align*}
		\lim_{t \to 0^+} \frac{\tilde{b}(t)}{b(t)} = \lim_{t \to \infty} \frac{\tilde{b}(t)}{b(t)} &= \infty, & 
		\lim_{t \to 0^+} \frac{\hat{b}(t)}{b(t)} = \lim_{t \to \infty} \frac{\hat{b}(t)}{b(t)} &= \infty.
	\end{align*}
	For details, see \cite[Chapter~1]{BinghamGoldie87}.
\end{remark}

\begin{remark}
	Let us note, that while the modern definition of s.v.~functions (as presented in Definition~\ref{DSV}) appears to be the most appropriate for the applications in function spaces and interpolation, the more classical version presented as \ref{DSVii} in the previous remark has also proven very useful and has a wide field of applications. We shall not attempt to provide a comprehensive list, but let us mention at least the extreme value theory starting with the monograph \cite{deHaan70}, the theory of probability where the monograph \cite{Feller71} was a major contribution, and the qualitative analysis of differential equation which is described well in the monograph \cite{Maric00}. To the readers wishing to go deeper into this topic we would recommend the monographs \cite{BinghamGoldie87} and \cite{deHaanLaurens06}; the surveys \cite{JessenMikosch06}, \cite{Nikolic18}, and \cite{Radulescu07}; and the works referenced therein.
\end{remark}

\section{Proof of the main result} \label{SecProof}
In this section we prove Theorem~\ref{TSmSV}. The theorem will be proved by a series of lemmata, many of which are of independent interest.

We begin with the following lemma which shows as a special case that the class of s.v.~functions is closed with respect to the relation ``$\approx$''. This special case is certainly not new, as it can be proved immediately from the definition, but it has never been formulated explicitly (at least to our knowledge), which is quite remarkable. Our lemma provides a sort of quantitative version of this observation that will be useful later.

\begin{lemma} \label{LCCC}
	Let $b$ be an s.v.~function and let $\mathcal{C}$ be some class of positive measurable functions, defined on $(0, \infty)$ and with values in $(0, \infty)$, for which there exists some constant $C > 0$ such that we have for all $c \in \mathcal{C}$:
	\begin{equation*}
		C^{-1}c \leq b \leq Cc.
	\end{equation*}
	Then every $c \in \mathcal{C}$ is an s.v.~function and for every $\alpha \in (0, \infty)$ there is a constant $\mathfrak{C}>0$, depending only on $\alpha$, $b$, and $C$, such that it holds for every $c \in \mathcal{C}$ and every $t \in (0, \infty)$ that
	\begin{align}
		\mathfrak{C}^{-1} t^{\alpha} c(t) &\leq \int_0^{t} s^{\alpha-1} c(s) \: ds \leq \mathfrak{C} t^{\alpha} c(t), \label{LCCC1}\\
		\mathfrak{C}^{-1} t^{-\alpha} c(t) &\leq \int_t^{\infty} s^{-\alpha-1} c(s) \: ds \leq \mathfrak{C} t^{-\alpha} c(t). \label{LCCC2}
	\end{align}
\end{lemma}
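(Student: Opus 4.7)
The hypothesis rewrites to $C^{-1}b \leq c \leq Cb$, which is exactly the statement that $c \approx b$ on $(0,\infty)$ with a constant ($C$) that is uniform over $c \in \mathcal{C}$. So the plan is essentially to feed this uniform equivalence into the results already proved for $b$ in Lemma~\ref{LSV}, while carefully tracking which constants survive.

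First, to establish that every $c \in \mathcal{C}$ is s.v., I would fix $\varepsilon > 0$ and pick the non-decreasing $b_\varepsilon$ and non-increasing $b_{-\varepsilon}$ supplied by Definition~\ref{DSV} for $b$. Since
\begin{equation*}
 C^{-1} t^{\varepsilon} c(t) \leq t^{\varepsilon} b(t) \leq C t^{\varepsilon} c(t)
\end{equation*}
on all of $(0,\infty)$, and similarly with $\varepsilon$ replaced by $-\varepsilon$, the equivalences $t^{\varepsilon} b(t) \approx b_{\varepsilon}(t)$ and $t^{-\varepsilon} b(t) \approx b_{-\varepsilon}(t)$ transfer directly to $c$ (the monotone witnesses $b_\varepsilon, b_{-\varepsilon}$ work verbatim, only the multiplicative constants change by a factor of $C$). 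Thus $c$ is s.v.

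Next, for the integral bounds, fix $\alpha \in (0,\infty)$ and apply property \ref{LEFF} to the s.v.\ function $b$: there exists a constant $K = K(\alpha,b) \geq 1$ such that
\begin{equation*}
 K^{-1} t^{\alpha} b(t) \leq \int_0^{t} s^{\alpha-1} b(s)\,ds \leq K t^{\alpha} b(t)
 \quad\text{and}\quad
 K^{-1} t^{-\alpha} b(t) \leq \int_t^{\infty} s^{-\alpha-1} b(s)\,ds \leq K t^{-\alpha} b(t)
\end{equation*}
for every $t \in (0,\infty)$. Chaining the pointwise inequalities $c \leq Cb$ and $b \leq Cc$ inside and outside the integrals, we obtain for example
\begin{equation*}
 \int_0^{t} s^{\alpha-1} c(s)\,ds \leq C \int_0^{t} s^{\alpha-1} b(s)\,ds \leq C K\, t^{\alpha} b(t) \leq C^{2} K\, t^{\alpha} c(t),
\end{equation*}
and analogously for the lower bound and for the second integral, giving \eqref{LCCC1} and \eqref{LCCC2} with $\mathfrak{C} := C^{2} K$, a constant depending only on $\alpha$, $b$, and $C$ as required.

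There is no real obstacle here; the only point that requires a moment of care is that the constant in \ref{LEFF} depends on $b$ but \emph{not} on $c$, so that after paying the factor $C^{2}$ coming from the two applications of the sandwich inequality one still obtains a constant $\mathfrak{C}$ that is uniform across the whole family $\mathcal{C}$. This uniformity is precisely what makes the lemma useful in the subsequent arguments.
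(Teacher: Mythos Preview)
Your proof is correct and essentially matches the paper's argument: both fix a constant from \ref{LEFF} for $b$ and chain the sandwich inequalities $C^{-1}b \leq c \leq Cb$ inside and outside the integral to produce $\mathfrak{C} = C^{2}K$. The only cosmetic difference is that the paper deduces the s.v.\ property of $c$ \emph{a posteriori} from \eqref{LCCC1}--\eqref{LCCC2} (since the integrals are monotone), whereas you verify it directly from Definition~\ref{DSV} using the same witnesses $b_{\pm\varepsilon}$; both are one-line observations.
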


\begin{proof}
	That a function $c$ is s.v. will follow once we prove the formulas \eqref{LCCC1} and \eqref{LCCC2}. Since the proof is identical in both cases, we will show only the case \eqref{LCCC1}. To this end, let us fix some $\alpha \in (0, \infty)$ and denote by $\mathfrak{B}$ some positive constant for which it holds that
	\begin{equation*}
		\mathfrak{B}^{-1} t^{\alpha} b(t) \leq \int_0^{t} s^{\alpha-1} b(s) \: ds \leq \mathfrak{B} t^{\alpha} b(t).
	\end{equation*}
	Such a constant exists, as follows from \ref{LEFF}, and it depends only on $\alpha$ and $b$. We now compute for arbitrary $c \in \mathcal{C}$
	\begin{align*}
		\int_0^{t} s^{\alpha-1} c(s) \: ds &\leq C \int_0^{t} s^{\alpha-1} b(s) \: ds \leq C \mathfrak{B} t^{\alpha} b(t) \leq C^2 \mathfrak{B} t^{\alpha} c(t), \\
		\int_0^{t} s^{\alpha-1} c(s) \: ds &\geq C^{-1} \int_0^{t} s^{\alpha-1} b(s) \: ds \geq  C^{-1} \mathfrak{B}^{-1} t^{\alpha} b(t) \geq  C^{-2} \mathfrak{B}^{-1} t^{\alpha} c(t).
	\end{align*}
	We thus obtain \eqref{LCCC1} with $\mathfrak{C} = C^2\mathfrak{B}$, i.e.~with $\mathfrak{C}$ depending only on $\alpha$, $b$, and $C$.
\end{proof}

The next lemma is mostly technical, but provides a crucial tool for later work.

\begin{lemma}\label{LShSV}
	Let $b$ be an s.v.~function satisfying
	\begin{equation} \label{LShSV1}
		\lim_{t \to 0^+} b(t) \in (0, \infty)
	\end{equation}
	and assume that the domain of $b$ is extended to the entire real line by putting
	\begin{align*}
		b(s) &=  \lim_{t \to 0^+} b(t) &\textup{ for } s \in (-\infty, 0].
	\end{align*}
	Then for every $t_0 \in \mathbb{R}$ there is a constant $C > 0$ such that for every $t_1 \in [-t_0, t_0]$ the function
	\begin{align*}
		b_{t_1}(t) &= b(t-t_1), &t \in (0, \infty),
	\end{align*}
	satisfies for every $t \in (0, \infty)$
	\begin{equation} \label{LShSV1a}
		C^{-1} \leq \frac{b_{t_1}(t)}{b(t)} \leq C.
	\end{equation}
	
	Specially, $b_{t_1}$ is an s.v.~function for any choice of $t_1 \in \mathbb{R}$.
\end{lemma}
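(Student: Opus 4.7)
The plan is to estimate the ratio $b_{t_1}(t)/b(t)$ by splitting into two regimes based on the size of $t$ relative to $t_0$: for large $t$ the scaling property \ref{CSV} will be the main tool, while for small $t$ the assumption \eqref{LShSV1} combined with \ref{SVC} does the work. We may assume $t_0 > 0$; the case $t_0 \leq 0$ either forces $t_1 = 0$ (giving $b_{t_1} \equiv b$) or makes the parameter set empty.

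For $t > 2 t_0$, I would write $t - t_1 = c\,t$ with $c = 1 - t_1/t$ and observe that $c \in [\tfrac{1}{2}, \tfrac{3}{2}]$ uniformly in $t_1 \in [-t_0, t_0]$. On this range $\min\{c^{-1}, c\} \geq \tfrac{1}{2}$ and $\max\{c^{-1}, c\} \leq 2$, so property \ref{CSV} applied with $\varepsilon = 1$ yields
\begin{equation*}
	(2 C_1)^{-1} b(t) \;\leq\; b(t - t_1) \;\leq\; 2 C_1 \, b(t),
\end{equation*}
with $C_1$ depending only on $b$. This is the required uniform two-sided bound on the large-$t$ regime.

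For $0 < t \leq 2 t_0$, I would show that the extended function $b$ is bounded above and away from zero on the entire interval $[-t_0, 3 t_0]$. Its restriction to $(-\infty, 0]$ is constant by construction; \eqref{LShSV1} furnishes a $\delta \in (0, 3t_0)$ such that $b$ lies between two positive constants on $(0, \delta]$; and \ref{SVC} handles the remainder $[\delta, 3 t_0]$, which is a compact subset of $(0, \infty)$. Patching these three pieces, both $b(t)$ and $b(t - t_1)$ lie in a common compact subinterval of $(0, \infty)$ for every $t \in (0, 2 t_0]$ and every $t_1 \in [-t_0, t_0]$, so the ratio is again two-sided bounded by constants depending only on $t_0$ and $b$.

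Combining the two regimes produces \eqref{LShSV1a} with a single constant $C$ depending only on $t_0$ and $b$. The final assertion then follows immediately from Lemma~\ref{LCCC} applied to the singleton class $\mathcal{C} = \{b_{t_1}\}$, since $b_{t_1}$ is clearly positive and measurable on $(0, \infty)$ and we have just shown $b_{t_1} \approx b$ uniformly. The main conceptual point of the argument is the role of \eqref{LShSV1}: a generic s.v.~function may be unbounded at $0$, in which case a translation would pit an unbounded quantity against a finite one and the ratio could not be controlled; the assumption precisely rules this pathology out and is what makes the small-$t$ regime tractable.
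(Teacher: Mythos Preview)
Your proof is correct and follows essentially the same approach as the paper's: a two-regime split using \ref{CSV} for large $t$ and the limit hypothesis together with \ref{SVC} for small $t$, followed by Lemma~\ref{LCCC}. Your organisation is in fact a bit tidier than the paper's, which splits first on the sign of $t_1$ and uses $t_0 + \delta$ as the threshold; by taking the threshold $2t_0$ you handle both signs of $t_1$ simultaneously and avoid the auxiliary constant $C_\delta$.
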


\begin{proof}
	Consider first the case $t_1 \geq 0$. It follows from \eqref{LShSV1} that there is a constant $C_1 > 0$ and some $\delta > 0$ such that
	\begin{align*}
		C_1^{-1} &\leq \frac{b(t)}{b(0)} \leq C_1 &\textup{ for } t \in (0, \delta).
	\end{align*}
	Furthermore, it follows from \ref{SVC} that there is some constant $C_2 > 0$ such that
	\begin{align*}
		C_2^{-1} &\leq \frac{b(t)}{b(0)} \leq C_2 &\textup{ for } t \in [\delta, t_0+\delta].
	\end{align*}
	Putting this together we obtain some constant $C_0 > 0$ and some $\delta > 0$ such that
	\begin{align*}
		C_0^{-1} &\leq \frac{b(t)}{b(0)} \leq C_0 &\textup{ for } t \in (0, t_0+\delta].
	\end{align*}
	It follows that for any $t_1 \in [0, t_0]$ the function $b_{t_1}$ also satisfies
	\begin{align*}
		C_0^{-1} &\leq \frac{b_{t_1}(t)}{b(0)} \leq C_0 &\textup{ for } t \in (0, t_0+\delta],
	\end{align*}
	as $b_{t_1}((0, t_0+\delta]) \subseteq b((0, t_0+\delta]) \cup \{b(0)\}$. Hence we obtain for any such $t_1$ that
	\begin{align} \label{LShSV2}
		C_0^{-2} &\leq \frac{b_{t_1}(t)}{b(t)} \leq C_0^2 &\textup{ for } t \in (0, t_0+\delta],
	\end{align}
	where $C_0$ and $\delta$ depend on $b$ and $t_0$ but not on $t_1$.
	
	As for the remaining interval $(t_0 + \delta, \infty)$, put
	\begin{equation} \label{LShSV2.5}
		C_{\delta} = \frac{t_0 + \delta}{t_0} > 1
	\end{equation}
	and note that $C_{\delta}$ does not depend on $t_1$. Then for $t > C_{\delta} t_0 = t_0 + \delta$ we have
	\begin{equation*}
		1 \geq \frac{t - t_1}{t} \geq \frac{t - t_0}{t} > 1 - C_{\delta}^{-1} > 0,
	\end{equation*}
	i.e.~for every such $t$ there is some number $a_t \in (1-C_{\delta}, 1)$ such that $t-t_1 = a_t t$. Applying \eqref{CSV1} with $c = a_t$ and $\varepsilon = 1$, we obtain that
	\begin{equation}  \label{LShSV3}
		\frac{b_{t_1}(t)}{b(t)} = \frac{b(a_t t)}{b(t)} \in [C_1^{-1}a_t, C_1a_t^{-1}] \subseteq \left [ C_1^{-1}(1-C_{\delta}), C_1(1-C_{\delta})^{-1} \right ],
	\end{equation}
	where $C_1$ depends only on $b$.
	
	Since all the constants in \eqref{LShSV2} and \eqref{LShSV3} depend only on $b$ and $t_0$, but not on $t_1$, we may now combine those estimates to obtain $C_+ = \max\{C_0^2, C_1(1-C_{\delta})^{-1}\}$ which is independent of $t_1 \in [0, t_0]$ and for which
	\begin{equation*}
		C_+^{-1} \leq \frac{b_{t_1}(t)}{b(t)} \leq C_+,
	\end{equation*}
	for all $t_1 \in [0, t_0]$ and all $t \in (0, \infty)$.
	
	The remaining case $t_1 < 0$ is similar, so we will be briefer. We can, similarly as before, find some $C_0 > 0$ such that
	\begin{align*}
		C_0^{-1} &\leq \frac{b(t)}{b(0)} \leq C_0 &\textup{ for } t \in (0, 2t_0+\delta].
	\end{align*}
	Since for any $t_1 \in [t_0, 0)$ we have $b_{t_1}((0, t_0+\delta]) \subseteq b((0, 2t_0+\delta])$, we conclude that
	\begin{align} \label{LShSV4}
		C_0^{-2} &\leq \frac{b_{t_1}(t)}{b(t)} \leq C_0^2 &\textup{ for } t \in (0, t_0+\delta].
	\end{align}
	
	As for the remaining interval, if we define $C_{\delta}$ as in \eqref{LShSV2.5}, we obtain that it holds for $t > C_{\delta} t_0 = t_0 + \delta$ that
	\begin{equation*}
		1 \leq \frac{t - t_1}{t} \leq \frac{t + t_0}{t} < 1 + C_{\delta}^{-1}.
	\end{equation*}
	We now, by the same argument as above, arrive to the estimate
	\begin{equation}  \label{LShSV5}
		\frac{b_{t_1}(t)}{b(t)}\left [ C_1^{-1}(1+C_{\delta})^{-1}, C_1(1+C_{\delta}) \right ].
	\end{equation}
	Since neither of the constants in \eqref{LShSV4} and \eqref{LShSV5} depend on $t_1$ we may again combine them to obtain the desired conclusion for $t_1 \in [-t_0,0)$. 
	
	Together with the first step, this shows that there is a constant $C>0$, depending only on $b$ and $t_0$, such that \eqref{LShSV1a} holds for all $t_1 \in [-t_0, t_0]$ and every $t \in (0, \infty)$. The remaining part follows directly from Lemma~\ref{LCCC}.
\end{proof}

We are now suitably equipped to prove a very restricted version of our main result.

\begin{lemma} \label{LSmSV}
	Let $b$ be an s.v.~function satisfying
	\begin{equation*} \label{LSmSV1}
		\lim_{t \to 0^+} b(t) \in (0, \infty).
	\end{equation*}
	Then there is a function $c : (0, \infty) \to (0, \infty)$ satisfying $c \approx b$ that has continuous classical derivatives of all orders (i.e.~$c \in \mathcal{C}^{\infty}$) and that also satisfies
	\begin{equation*} \label{LSmSV1aa}
		\lim_{t \to 0^+} c(t) \in (0, \infty).
	\end{equation*}
	
	Furthermore, if $b$ is constant on some right neighbourhood of zero, then the function $c$ can be chosen in such a way that
	\begin{equation} \label{LSmSV1a}
		\lim_{t \to 0^+} c^{(n)}(t) = 0
	\end{equation}
	for all $n \in \mathbb{N} \setminus \{0\}$.
\end{lemma}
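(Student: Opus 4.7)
The plan is to construct $c$ by standard mollification. Extend $b$ to all of $\mathbb{R}$ by setting $b(s) = \lim_{t \to 0^+} b(t)$ for $s \leq 0$, as in the setup of Lemma~\ref{LShSV}; this extension is bounded below and above by positive constants on every compact subset of $\mathbb{R}$, by \ref{SVC} together with \eqref{LShSV1}. Fix a nonnegative mollifier $\varphi \in \mathcal{C}_c^\infty(\mathbb{R})$ with $\supp \varphi \subseteq [-1, 1]$ and $\int_{\mathbb{R}} \varphi = 1$, and define
\begin{equation*}
c(t) = \int_{\mathbb{R}} b(t - s) \varphi(s) \, ds, \quad t \in (0, \infty).
\end{equation*}

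Once $c$ is defined, the three required properties follow quickly. First, since $\varphi$ is smooth and compactly supported and the extended $b$ is locally bounded (hence locally integrable), $c$ is of class $\mathcal{C}^\infty$ on all of $\mathbb{R}$, with derivatives $c^{(n)}(t) = \int b(t-s) \varphi^{(n)}(s) \, ds$ obtained by differentiation under the integral sign. Second, to verify $c \approx b$, I would invoke Lemma~\ref{LShSV} with $t_0 = 1$: this yields a constant $C > 0$, depending only on $b$, such that $C^{-1} b(t) \leq b(t - t_1) \leq C b(t)$ for every $t_1 \in [-1, 1]$ and every $t \in (0, \infty)$. Integrating these pointwise bounds against $\varphi$ (nonnegative, total mass $1$) immediately gives $C^{-1} b(t) \leq c(t) \leq C b(t)$, so in particular $c$ is strictly positive and equivalent to $b$. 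Third, continuity of $c$ at $0$ (which we get for free from $c \in \mathcal{C}^\infty(\mathbb{R})$) gives $\lim_{t \to 0^+} c(t) = c(0) = \int b(-s) \varphi(s) \, ds$, which is finite since $b$ is bounded on $[-1,1]$ and strictly positive since $b > 0$ on the support of $\varphi$.

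For the "furthermore" clause, suppose $b \equiv a$ on some right neighbourhood $(0, \delta)$ of $0$; then by construction the extension is identically $a$ on all of $(-\infty, \delta)$, and it suffices to choose $\varphi$ with $\supp \varphi \subseteq [-\delta/2, \delta/2]$ (all of the preceding steps go through verbatim with this choice, since Lemma~\ref{LShSV} is available for every $t_0 > 0$). For any $t \in (0, \delta/2)$ the integration domain $[t - \delta/2, t + \delta/2]$ lies entirely in $(-\delta/2, \delta)$, where $b$ is identically $a$, so $c \equiv a$ on $(0, \delta/2)$ and therefore every derivative $c^{(n)}$, $n \geq 1$, vanishes identically on this interval, which yields \eqref{LSmSV1a}.

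The main obstacle in this argument is ensuring that mollification does not distort the equivalence $c \approx b$ in an uncontrolled way across the full half-line $(0, \infty)$; a priori, since $b$ may grow or decay at various rates and the mollification samples $b$ at shifted arguments $t - s$, one would expect the resulting ratio to depend on $t$. It is precisely this difficulty that Lemma~\ref{LShSV} was designed to resolve, which is why it had to be established in advance, and once it is in hand the rest of the proof is essentially bookkeeping.
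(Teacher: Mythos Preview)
Your proof is correct and uses the same construction as the paper: extend $b$ via Lemma~\ref{LShSV}, mollify with a smooth compactly supported kernel, and shrink the kernel's support for the ``furthermore'' clause. The one place where your argument diverges is in verifying $c\approx b$. The paper computes $\int_0^t s^{\varepsilon-1}c(s)\,ds$ via Fubini, then invokes Lemma~\ref{LCCC} on the family of shifts to show this integral is $\approx t^{\varepsilon}c(t)$, and separately $\approx t^{\varepsilon}b(t)$ via \ref{LEFF}, finally comparing the two. You instead integrate the uniform pointwise bound $C^{-1}b(t)\le b(t-s)\le Cb(t)$ from Lemma~\ref{LShSV} directly against the kernel, which gives $C^{-1}b(t)\le c(t)\le Cb(t)$ in one line. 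Your route is shorter and entirely bypasses the Fubini computation and the appeal to Lemma~\ref{LCCC}; the paper's detour yields, as a by-product, the explicit verification of \eqref{LEFF1} for $c$, but since $c\approx b$ already forces $c$ to be s.v.\ (by Lemma~\ref{LCCC}), nothing is lost.
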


Note that we do not claim that $\lim_{t \to 0^+} c(t)$ is equal to $\lim_{t \to 0^+} b(t)$, only that it exists and that it is a finite non-zero number.

\begin{proof}
	Assume that the domain of the function $b$ is extended as in Lemma~\ref{LShSV} and let $\eta : \mathbb{R} \to [0, \infty)$ be a mollifying kernel, that is, let it satisfy the following three requirements:
	\begin{enumerate}
		\item $\eta \in \mathcal{C}^{\infty}$,
		\item $\supp \eta \subseteq (-1, 1)$,
		\item $\int_{-\infty}^{\infty} \eta \: d\lambda = 1$.
	\end{enumerate}
	Then we may consider the function $c : \mathbb{R} \to (0, \infty)$ defined for $t \in \mathbb{R}$ by
	\begin{equation*}
		c(t) = b*\eta(t) = \int_{-\infty}^{\infty} b(s) \eta(t-s) \: ds = \int_{-1}^1 b(t-s) \eta(s) \: ds,
	\end{equation*}
	where the last equality is due to a change of variables. Then clearly $c \in \mathcal{C}^{\infty}$ so it remains to verify that $c$ is s.v.~when restricted to $(0, \infty)$ and that $c \approx b$ on $(0, \infty)$. To this end, we fix some $\varepsilon>0$ and $t>0$ and compute by the classical Fubini's theorem (which we may use because the integrand is positive):
	\begin{equation} \label{LSmSV2}
		\int_0^{t} s^{\varepsilon-1} c(s) \: ds = \int_0^{t} \int_{-1}^1 s^{\varepsilon-1}b(s-x) \eta(x) \: dx \: ds = \int_{-1}^1 \eta(x) \int_0^{t} s^{\varepsilon-1}b(s-x) \: ds \: dx.
	\end{equation}
	Now, it follows from Lemma~\ref{LShSV} that the family of functions
	\begin{equation*}
		\mathcal{C} = \left \{ s \mapsto b(s-x); \; x \in [-1,1] \right \}
	\end{equation*}
	satisfies the requirements of Lemma~\ref{LCCC} and thus we have for all $t \in (0, \infty)$ that
	\begin{equation*}
		\int_0^{t} s^{\varepsilon-1}b(s-x) \: ds \approx t^{\varepsilon}b(t-x),
	\end{equation*}
	where the equivalence constants do not depend on $x \in [-1,1]$. By plugging this into \eqref{LSmSV2} we obtain
	\begin{equation} \label{LSmSV3}
		\int_0^{t} s^{\varepsilon-1} c(s) \: ds \approx \int_{-1}^1 \eta(x) t^{\varepsilon}b(t-x) \: dx = t^{\varepsilon}c(t).
	\end{equation}
	
	On the other hand, Lemma~\ref{LShSV} also implies that it holds for all $t \in (0, \infty)$ that $b \approx t \mapsto b(t-x)$ where again the equivalence constants do not depend on $x \in [-1,1]$. Plugging this into \eqref{LSmSV2} we obtain by \ref{LEFF} and the third property of $\eta$ that it holds for all $t \in (0, \infty)$ that
	\begin{equation} \label{LSmSV4}
		\int_0^{t} s^{\varepsilon-1} c(s) \: ds \approx \int_{-1}^1 \eta(x) \int_0^{t} s^{\varepsilon-1}b(s) \: ds \: dx \approx \int_{-1}^1 \eta(x) t^{\varepsilon}b(t) \: dx = t^{\varepsilon} b(t).
	\end{equation}
	By comparing \eqref{LSmSV3} and \eqref{LSmSV4} we may now conclude that indeed $b \approx c$ on $(0, \infty)$ and, consequently, that $c$ restricted to $(0, \infty)$ is indeed an s.v.~function. Moreover, \eqref{LSmSV1aa} follows trivially from the continuity of $c$ at $0$.
	
	As for the remaining part, if the function $b$ is constant on $(0, 1)$ then the function $c$ is constant on $(-1,0)$. Since $c \in \mathcal{C}^{\infty}$, we conclude that $c^{(n)}(0) = 0$ for all $n \in \mathbb{N} \setminus \{0\}$ and \eqref{LSmSV1a} follows. If the right neighbourhood of zero on which $b$ is constant is smaller, then we may modify our proof by using a different kernel $\eta$ with small enough support and repeat the argument to arrive at the same conclusion. We leave the details to the reader.
\end{proof}

To prove Theorem~\ref{TSmSV} in full strength, we will need the following three lemmata that provide tools for decomposing and combining s.v.~functions. Those are certainly of independent interest, as they provide much stronger tools for deriving s.v.~functions than those contained in Lemma~\ref{LSV}. We recognise that if we wanted to simply derive new s.v.~functions then we could have done it in a simpler way, but in order to prove Theorem~\ref{TSmSV} we will need those lemmata in their present form.

\begin{lemma} \label{LCSV}
	Let $b$ be an s.v.~function. Then the functions
	\begin{align*}
		b_1 &= \chi_{(0, 1]} + b \chi_{(1, \infty)}, \\
		b_2 &= b \chi_{(0, 1]} + \chi_{(1, \infty)}
	\end{align*}
	are also s.v.
\end{lemma}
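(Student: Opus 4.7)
My plan is to verify Definition~\ref{DSV} directly for $b_1$ and $b_2$ by gluing together the monotone equivalents supplied for $b$ into piecewise monotone equivalents for the modified functions. I will describe the construction for $b_1$ in detail; the construction for $b_2$ is entirely analogous with the roles of $(0,1]$ and $(1,\infty)$ interchanged, and can alternatively be reduced to the case of $b_1$ by applying property~\ref{SV2} to the auxiliary s.v.~function $t \mapsto b(1/t)$.

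Fix $\varepsilon > 0$ and let $b_\varepsilon$ (non-decreasing) and $b_{-\varepsilon}$ (non-increasing) be the monotone equivalents provided by Definition~\ref{DSV} for $b$. Because $t^{\pm\varepsilon} b(t)$ takes finite positive values on $(0,\infty)$, the values $b_{\pm\varepsilon}(1)$ belong to $(0,\infty)$ and can serve as normalising constants. I would define
\begin{equation*}
b_{1,\varepsilon}(t) = \begin{cases} t^\varepsilon & t \in (0,1], \\ b_\varepsilon(t)/b_\varepsilon(1) & t \in (1,\infty), \end{cases}
\qquad
b_{1,-\varepsilon}(t) = \begin{cases} t^{-\varepsilon} & t \in (0,1], \\ b_{-\varepsilon}(t)/b_{-\varepsilon}(1) & t \in (1,\infty). \end{cases}
\end{equation*}
The normalisation forces both pieces of each function to agree at the seam $t = 1$ (with common value $1$), and since each piece is separately monotone in the required direction, the glued function is globally monotone on $(0,\infty)$ of the correct type.

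Verifying the required equivalence is then a one-line case distinction: on $(0,1]$ the function $b_1$ is identically $1$, so $t^{\pm\varepsilon} b_1(t) = t^{\pm\varepsilon} = b_{1,\pm\varepsilon}(t)$ exactly; on $(1,\infty)$ one has $t^{\pm\varepsilon} b_1(t) = t^{\pm\varepsilon} b(t) \approx b_{\pm\varepsilon}(t) = b_{\pm\varepsilon}(1) \, b_{1,\pm\varepsilon}(t)$, with the equivalence constants inherited from those given by Definition~\ref{DSV} for $b$. I do not anticipate any serious obstacle; the only subtlety is the normalisation of each piece at $t = 1$, without which the candidate function would generally fail to be monotone across the seam. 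The argument for $b_2$ proceeds identically, using $b_{\pm\varepsilon}(1)^{-1} b_{\pm\varepsilon}$ on $(0,1]$ and $t^{\pm\varepsilon}$ on $(1,\infty)$.
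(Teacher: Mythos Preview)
Your proposal is correct and follows essentially the same approach as the paper: both arguments glue $t^{\pm\varepsilon}$ on $(0,1]$ to a rescaled copy of $b_{\pm\varepsilon}$ on $(1,\infty)$, with the rescaling chosen so that monotonicity is preserved across the seam. The only cosmetic difference is that the paper picks arbitrary constants $C_1,C_2$ satisfying $C_1 b_\varepsilon(1)>1$ and $C_2 b_{-\varepsilon}(1)<1$, whereas you take the specific choice $C_i = 1/b_{\pm\varepsilon}(1)$ to match the two pieces exactly at $t=1$; either works.
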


\begin{proof}
	We will show only that $b_1$ is an s.v.~function, since the proof for $b_2$ is almost identical. To this end, fix some $\varepsilon >0$ and find the appropriate monotone functions $b_{\varepsilon}$ and $b_{-\varepsilon}$ that are asserted to exist by Definition~\ref{DSV}. Find some $C_1, C_2>0$ such that $C_1b_{\varepsilon}(1)>1$ and $C_2b_{-\varepsilon}(1)<1$. Then $t^{\varepsilon}b_1(t) \approx \tilde{b}_{\varepsilon}(t)$, where $\tilde{b}_{\varepsilon}$ is given by
	\begin{align*}
		\tilde{b}_{\varepsilon}(t) &=
		\begin{cases}
			t^{\varepsilon} &\textup{for } t \in (0,1], \\
			C_1b_{\varepsilon}(t) &\textup{for } t \in (1, \infty),
		\end{cases}
	\end{align*}
	i.e.~it is a non-decreasing function. Similarly, $t^{-\varepsilon}b_1(t) \approx \tilde{b}_{-\varepsilon}$, where $\tilde{b}_{-\varepsilon}$ is given by
	\begin{align*}
		\tilde{b}_{-\varepsilon}(t) &=
		\begin{cases}
			t^{-\varepsilon} &\textup{for } t \in (0,1], \\
			C_2b_{-\varepsilon}(t) &\textup{for } t \in (1, \infty),
		\end{cases}
	\end{align*}
	i.e.~it is a non-increasing function.
\end{proof}

\begin{lemma} \label{LSSV}
	Let $b$ be an s.v.~function. Then the functions $b_1$ and $b_2$, defined for $t \in (0, \infty)$ by
	\begin{align*}
		b_1(t) &= b(t+1), \\
		b_2(t) &= b\left( \frac{1}{t+1} \right)
	\end{align*}
	are also s.v.~and satisfy the estimates $b_1 \approx b$ on $(1, \infty)$ and $b_2 \approx t \mapsto b(\frac{1}{t})$ on $(1, \infty)$, (or, equivalently, $t \mapsto b_2(\frac{1}{t}) \approx b$ on $(0, 1)$).
	
	Furthermore, there are some s.v.~functions $c_1, c_2$ such that for $i = 1,2$ we have $b_i \approx c_i$ and $c_i(t) = 1$ for $t \in (0, 1]$.
\end{lemma}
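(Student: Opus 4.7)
The plan is to first establish the equivalences $b_1 \approx b$ on $(1,\infty)$ and $b_2 \approx t \mapsto b(1/t)$ on $(1,\infty)$ using property \ref{CSV}, then to construct the s.v.~approximants $c_1, c_2$ explicitly, and finally to deduce that $b_1$ and $b_2$ are themselves s.v.~by applying Lemma~\ref{LCCC}.

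For the first step, the key observation is that for $t > 1$ we can write $t+1 = t \cdot (1 + 1/t)$ with the dilation factor $1 + 1/t$ ranging in $(1, 2)$, and similarly $\frac{1}{t+1} = \frac{1}{t} \cdot \frac{t}{t+1}$ with $\frac{t}{t+1}$ ranging in $(1/2, 1)$. Since the constant $C_\varepsilon$ in \eqref{CSV1} depends only on $\varepsilon$ and not on the dilation factor, applying \ref{CSV} with, say, $\varepsilon = 1$ yields $b(t+1) \approx b(t)$ and $b(1/(t+1)) \approx b(1/t)$ on $(1, \infty)$, with equivalence constants depending only on $b$.

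Next, I would define
\begin{align*}
    c_1 &= \chi_{(0,1]} + b\chi_{(1,\infty)}, &
    c_2 &= \chi_{(0,1]} + \bigl(t \mapsto b(1/t)\bigr)\chi_{(1,\infty)},
\end{align*}
which are both s.v.~by Lemma~\ref{LCSV} (for $c_2$ one first notes that $t \mapsto b(1/t)$ is s.v.~by \ref{SV2}) and trivially satisfy $c_i = 1$ on $(0,1]$. To check $b_i \approx c_i$ globally, I would split the domain: on $(1, \infty)$ the equivalence is the content of the previous step, while on $(0,1]$ the arguments $t+1$ and $1/(t+1)$ lie in the compact subsets $(1,2]$ and $[1/2, 1)$ of $(0,\infty)$, so \ref{SVC} gives that $b(t+1)$ and $b(1/(t+1))$ are sandwiched between positive constants, which means $b_i \approx 1 = c_i$ on $(0,1]$ as well.

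Once the equivalences $b_i \approx c_i$ are in hand, Lemma~\ref{LCCC} applied to the singleton class $\mathcal{C} = \{b_i\}$, with $c_i$ playing the role of the reference s.v.~function, delivers the conclusion that $b_i$ is itself s.v. No serious obstacle is anticipated; the only subtlety is the need for uniformity in the scaling factor of the constant $C_\varepsilon$ in \ref{CSV}, but this is explicitly noted in the statement of Lemma~\ref{LSV}.
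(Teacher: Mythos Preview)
Your proposal is correct and follows essentially the same approach as the paper: use \ref{CSV} with a dilation factor bounded in a fixed interval to get the equivalence on $(1,\infty)$, use \ref{SVC} on the compact range of the argument to handle $(0,1]$, define $c_i$ via Lemma~\ref{LCSV}, and conclude that $b_i$ is s.v.\ via Lemma~\ref{LCCC}. The only cosmetic difference is that you treat $b_2$ explicitly whereas the paper declares it analogous.
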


\begin{proof}
	We will prove only the assertions concerning $b_1$ as the case of $b_2$ is analogous. To this end, we note that if $t >1$ then
	\begin{equation*}
		1 < \frac{t+1}{t} < 2,
	\end{equation*}
	i.e.~for every such $t$ there is some number $a_t \in (1,2)$ such that $t+1 = a_t t$. Applying \eqref{CSV1} with $c = a_t$ and $\varepsilon = 1$, we obtain that
	\begin{equation*}
		\frac{b(t+1)}{b(t)} = \frac{b(a_t t)}{b(t)} \in [C_1^{-1}a_t^{-1}, C_1a_t] \subseteq \left [ \frac{1}{2C_1}, 2C_1 \right ].
	\end{equation*}
	Since the constant $C_1$ depends only on our choice of $\varepsilon$, but not on our choice of $a_t$, we obtain that
	\begin{align} \label{LSSV1}
		b_1(t) &\approx b(t) &\textup{for } t \in (1, \infty).
	\end{align}
	
	Turning now to the interval $(0,1]$, we see that the image of this interval with respect to $b_1$ is precisely the image of the interval $(1,2]$ with respect to $b$. It thus follows from \ref{SVC} that
	\begin{align} \label{LSSV2}
		b_1(t) &\approx \chi_{(0, 1]}(t) &\textup{for } t \in (0,1].
	\end{align}
	
	Putting \eqref{LSSV1} and \eqref{LSSV2} together, we see that if we put
	\begin{align*}
		c_1(t) &=
		\begin{cases}
			1 &\textup{for } t \in (0,1], \\
			b(t) &\textup{for } t \in (1, \infty),
		\end{cases}
	\end{align*}
	then $c_1(t) \approx b_1(t)$ for $t \in (0, \infty)$. Furthermore, $c_1$ is an s.v.~function, as  follows from Lemma~\ref{LCSV}. That $b_1$ is also s.v.~now follows from Lemma~\ref{LCCC}, while the desired estimate $b_1 \approx b$ on $(1, \infty)$ follows from the formula defining $c_1$.
\end{proof}

\begin{lemma}\label{LCoSV}
	Let $b_1$ and $b_2$ be s.v.~functions satisfying
	\begin{align*}
		\lim_{t \to 0^+} b_i(t) &\in (0, \infty) &\textup{ for } i =1,2.
	\end{align*}
	Then the function $b$, defined for $t \in (0, \infty)$ by
	\begin{align*}
		b(t) &= 
		\begin{cases}
			b_2 \left( \frac{1}{t} -1 \right) &\textup{ for } t<1, \\
			1 &\textup{ for } t = 1, \\
			b_1(t-1) &\textup{ for } t > 1,
		\end{cases}
	\end{align*}
	is also s.v.~and satisfies $b \approx t \mapsto b_2(\frac{1}{t})$ on $(0, 1)$ and $b \approx b_1$ on $(1, \infty)$.
\end{lemma}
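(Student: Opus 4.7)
The plan is to write $b$ as a product of two s.v.~functions, each of which is essentially constant on one of the intervals $(0,1]$ or $[1,\infty)$, and then invoke the tools already built up. Concretely, define
\begin{align*}
    \tilde{b}_1(t) &= \begin{cases} 1 & \text{for } t \in (0,1], \\ b_1(t-1) & \text{for } t \in (1,\infty), \end{cases}
    &
    \tilde{b}_2(t) &= \begin{cases} b_2\bigl( \tfrac{1}{t} -1 \bigr) & \text{for } t \in (0,1), \\ 1 & \text{for } t \in [1,\infty). \end{cases}
\end{align*}
Then a direct case distinction at $t<1$, $t=1$, $t>1$ gives $b = \tilde{b}_1 \cdot \tilde{b}_2$, so by property \ref{SV2} it will suffice to show that $\tilde{b}_1$ and $\tilde{b}_2$ are each s.v.

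For $\tilde{b}_1$, I would use the finiteness of $\lim_{t \to 0^+} b_1(t)$ to extend $b_1$ to all of $\mathbb{R}$ by that limit, as prescribed in Lemma~\ref{LShSV}. Applying Lemma~\ref{LShSV} with $t_0 = t_1 = 1$ then gives that the shifted function $\alpha_1(t) := b_1(t-1)$ is s.v.~on $(0,\infty)$ and satisfies $\alpha_1 \approx b_1$. On the interval $(0,1]$ the shifted function collapses to the positive finite constant $\lim_{s \to 0^+} b_1(s)$, so $\tilde{b}_1 \approx \alpha_1$ on all of $(0,\infty)$, and Lemma~\ref{LCCC} upgrades this to $\tilde{b}_1$ being s.v. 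For $\tilde{b}_2$, I would run the same shift argument on $b_2$ to get that $\alpha_2(t) := b_2(t-1)$ is s.v.~on $(0,\infty)$, and then invoke property \ref{SV2} on the map $t \mapsto 1/t$ to conclude that $t \mapsto \alpha_2(1/t) = b_2(1/t - 1)$ is s.v.~on $(0,\infty)$. Since this latter function equals $\lim_{s \to 0^+} b_2(s)$ on $[1,\infty)$, it is equivalent to $\tilde{b}_2$ on $(0,\infty)$, and Lemma~\ref{LCCC} gives that $\tilde{b}_2$ is s.v.

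The two claimed asymptotic identities are then immediate: on $(1,\infty)$ one has $b = \tilde{b}_1 \tilde{b}_2 = b_1(t-1) \cdot 1$, and Lemma~\ref{LShSV} gives $b_1(t-1) \approx b_1(t)$ there; on $(0,1)$ one has $b = 1 \cdot b_2(1/t-1)$, and applying Lemma~\ref{LShSV} to $b_2$ at the point $s = 1/t \in (1,\infty)$ yields $b_2(1/t - 1) \approx b_2(1/t)$. I do not expect any real obstacle: the content of the lemma is essentially the observation that the two shifts (one in $t$, one in $1/t$) are harmless in the s.v.~category, and the only bookkeeping issue is keeping track of the positive constant factors $\lim_{t \to 0^+} b_i(t)$ when identifying $\tilde{b}_i$ with the shifted functions on the ``collapsed'' intervals, which is precisely what the hypothesis on the limits is for.
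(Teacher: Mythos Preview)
Your proof is correct and follows essentially the same route as the paper's: both arguments extend the $b_i$ as in Lemma~\ref{LShSV}, use that lemma to see that $t \mapsto b_1(t-1)$ and (via \ref{SV2}) $t \mapsto b_2(1/t-1)$ are s.v., observe that their product is equivalent to $b$ because each factor is constant on the interval where the other factor carries the information, and then invoke Lemma~\ref{LCCC}. The only cosmetic difference is that you name the piecewise factors $\tilde{b}_1,\tilde{b}_2$ and verify $b=\tilde{b}_1\tilde{b}_2$ exactly, whereas the paper works directly with the product of the extended shifted functions and records $b \approx b_2(1/t-1)\,b_1(t-1)$.
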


\begin{proof}
	Assume that the domains of the functions $b_i$ are extended as in Lemma~\ref{LShSV}. Then, by the same lemma, the shifted functions $t \mapsto b_i (t-1)$ are s.v.~and satisfy $t \mapsto b_i (t-1) \approx b_i$ on $(0, \infty)$. Moreover, we obtain from \ref{SV2} that the function $t \mapsto b_2(\frac{1}{t} - 1)$, and consequently also $t \mapsto b_2(\frac{1}{t} - 1)b_1(t-1)$, are both s.v.~too. Since  $t \mapsto b_1 (t-1)$ is constant on $(0, 1)$ and $t \mapsto b_2(\frac{1}{t} - 1)$ is constant on $(1, \infty)$, we obtain that $b \approx t \mapsto b_2(\frac{1}{t} - 1)b_1(t-1)$ on $(0, \infty)$ and thus we have by Lemma~\ref{LCCC} that $b$ is an s.v.~function. Finally, the remaining estimates follow directly from the above mentioned fact that $t \mapsto b_i (t-1) \approx b_i$ on $(0, \infty)$.
\end{proof}

We are now prepared to prove Theorem~\ref{TSmSV}. The idea of the proof is to use Lemma~\ref{LSSV} to decompose any given s.v.~function $b$ into two parts, apply Lemma~\ref{LSmSV} to both of those and then combine the obtained functions via Lemma~\ref{LCoSV}.

\begin{proof}[Proof of Theorem~\ref{TSmSV}.]
	We begin by using Lemma~\ref{LSSV} to find a pair of functions $b_1$ and $b_2$ that are equal to $1$ on $(0, 1)$ and that satisfy $b_1 \approx b$ on $(1, \infty)$ and $b_2 \approx t \mapsto b(\frac{1}{t})$ on $(1, \infty)$. We may now use Lemma~\ref{LSmSV} to find a pair of $\mathcal{C}^{\infty}$ functions $c_1$ and $c_2$ that satisfy for $i=1,2$ that $c_i \approx b_i$ on $(0, \infty)$ and that
	\begin{align} 
		\lim_{t \to 0^+} c_i(t) &\in (0, \infty), \label{TSmSV0.5} \\
		\lim_{t \to 0^+} c_i^{(n)}(t) &= 0, \label{TSmSV1}
	\end{align}
	for all $n \in \mathbb{N} \setminus \{0\}$. Since none of those properties break if we multiply the functions $c_i$ by finite positive constants, we may in fact require that not only do the limits in \eqref{TSmSV0.5} exist but that we actually have
	\begin{equation} \label{TSmSV2}
		\lim_{t \to 0^+} c_1(t) = \lim_{t \to 0^+} c_2(t) = 1.
	\end{equation}
	We may now apply Lemma~\ref{LCoSV} on $c_1$ and $c_2$ to obtain a single function $c$. It is simple to verify that \eqref{TSmSV1} and \eqref{TSmSV2} guarantee that $c \in \mathcal{C}^{\infty}$. Furthermore, Lemma~\ref{LCoSV} combined with our previous steps implies that $c \approx t \mapsto c_2(\frac{1}{t})  \approx t \mapsto b_2(\frac{1}{t}) \approx b$ on $(0, 1)$. Similarly, $c \approx c_1 \approx b_1 \approx b$ on $(1, \infty)$.
\end{proof}

\begin{remark} \label{RCon}
	To conclude, we would like to point out that it would be useful for applications if something could be said about the derivatives of s.v.~functions. For example, \ref{LTb} shows that given an s.v.~function $b$, one may construct a new (and essentially larger) non-decreasing s.v.~function $\tilde{b}$ by taking
	\begin{equation*}
		\tilde{b}(t) = \int_0^{t} s^{-1} b(s) \: ds.
	\end{equation*}
	It is conceivable that the inverse could also be true, i.e.~that given a smooth non-decreasing s.v.~function $b$, there would be some (essentially smaller) s.v.~function $b_0$ such that $b'(t) = t^{-1} b_0(t)$. This is indeed what happens in many of the cases that appear in applications---it~holds, for example, for the functions of the form
	\begin{equation*}
		t \mapsto \left(  \int_0^{t} s^{-1} b^q(s) \: ds \right)^{\frac{1}{q}}
	\end{equation*}
	that appear naturally as the fundamental functions of the limiting cases of Lorentz--Karamata spaces (see \cite{PesaLK} for details)---and it would also be consistent with the fact that
	\begin{equation*}
	(t \mapsto t^{\alpha} b(t))' \approx t \mapsto t^{\alpha-1} b(t)
	\end{equation*}
	for $\alpha > 0$, which follows from \ref{LEFF}. A similar thing could also possibly hold for smooth non-increasing s.v. functions, only the derivative would be $b'(t) = -t^{-1} b_0(t)$ (as corresponds to the integral defining $\hat{b}$). Regretfully, we were unable to prove any of those conjectures so the problem remains open.
	
	For an example of a situation where such a result would be useful, observe \cite[Theorem~3.14]{PesaLK} where the Lorentz endpoint space corresponding to the fundamental function of a limiting Lorentz--Karamata space is described using the derivative of the s.v.~function that defines said Lorentz--Karamata space. If the above-presented conjecture were true, this description could be made more concrete and transparent and we would obtain that the mentioned Lorentz endpoint is itself a Lorentz--Karamata space. The result would thus be much more satisfactory.
\end{remark}

\bibliographystyle{dabbrv}
\bibliography{bibliography}
\end{document}